\newcommand\NN{\mathbb N}
\newcommand\tr{\operatorname{tr}}
\theoremstyle{plain}
\newtheorem{thm}{Theorem}[section]
\newtheorem{prop}[thm]{Proposition}
\theoremstyle{definition}
\theoremstyle{remark}
\newtheorem{remark}[thm]{Remark}
\title[ A further look at time-and-band limiting for matrix orthogonal polynomials ]{
A further look at time-and-band limiting for matrix orthogonal polynomials}
\author{M. Castro \and  F. A.  Gr\"unbaum \and  I. Pacharoni \and I. Zurri\'an}
\address{Universidad de Sevilla, Departamento de Matem\'atica Aplicada II, EPS c/ Virgen de Africa 7, 41011, Sevilla, Spain}
\email{mirta@us.es}
\thanks{The work of the first author was  partially supported by  MTM2015-6588-C4-1-P (Ministerio de Econom\'ia y Competitividad),
FQM-262, FQM-7276 (Junta de Andaluc\'ia), Feder Funds (European
Union) and the program Campus de Excelencia Internacional of the Ministerio de Educaci\'on, Cultura y Deporte. }
\address{Department of Mathematics, University of California, Berkeley
CA 94705}
\email{grunbaum@math.berkeley.edu}
\address{~CIEM-FaMAF, Universidad Nacional de C\'ordoba, C\'ordoba~5000, Argentina}
\email{pacharon@famaf.unc.edu.ar}
\thanks{The third and fourth authors were partially suported by SeCyT-UNC and by CONICET grant PIP
112-200801-01533.}
\address{Facultad de Matem\'aticas, Pontificia Universidad Cat\'olica, Santiago~7820436, Chile}
\email{zurrian@famaf.unc.edu.ar}
\thanks{The work of the fourth author was also supported by  FONDECYT 3160646. }
\subjclass[2010]{33C45, 22E45, 33C47}
\keywords{Time-band limiting, Matrix valued orthogonal polynomials}
\begin{document}

\begin{abstract} We extend to
a situation involving matrix valued orthogonal polynomials a scalar result that originates in work of Claude Shannon 
and a ground-breaking series of papers by  D. Slepian, H. Landau and H. Pollak at Bell Labs in the 1960's. While these papers feature integral and differential operators acting on scalar valued functions, we are dealing here with integral and differential operators acting on matrix valued functions.

\end{abstract}

\maketitle

\section{Introduction}

 Claude Shannon, \cite{S}, posed the question of how to best use
the values of the Fourier transform $\mathcal Ff(k)$ of $f$ for values of $k$ in the band $[-W,W]$ when $f(t)$ is a time limited signal.

A detailed account of how this led to  
the series of papers
by three workers at Bell labs in the 1960's: David Slepian, Henry Landau and Henry Pollak, see \cite{SLP1,SLP2,SLP3,SLP4,SLP5} is given in \cite{CG5,GPZ1,GPZ2} and need not be repeated here. Readers unfamiliar with these Bell Lab papers may want to look at these last references.

\bigskip

With this motivation at hand, we can give an account of what we do in this paper: we start with a (matrix valued) version of
a second order differential operator.  Here Shannon would have started with the (scalar valued) second derivative.

We then build the analog of the ``time-and-band limiting" integral  operator,  which we will denote by $S$. We then show that the same ``lucky accident" that the workers at Bell labs found holds here too: we can exhibit a second order differential operator , denoted by $\widetilde{D}$ such that$$S \widetilde{D}  = \widetilde{D}  S.$$

This has, as in the original case of Shannon, very important numerical consequences: it gives a reliable way to compute the eigenvectors of $S$, something that cannot be done otherwise.

\bigskip

The eigenfunctions of $S$ and $\widetilde {D}$ are the same, but using the differential operator instead of the integral one, we have a manageable numerical problem: while the integral operator has a spectrum with eigenvalues that are extremely close together, the differential one has a very spread out spectrum, resulting in a stable numerical computation.

\bigskip

Previous explorations of the commutativity property above in the matrix valued case can be seen in \cite{GPZ1,CG5}, dealing with a full matrix and a narrow banded one, and in \cite{GPZ2}, dealing with an integral operator. In the present paper we extend the work started in the previous references.

\bigskip

For more details on computational issues see \cite{BK14,JKS,ORX}. For applications involving (sometimes) vector-valued quantities on the sphere, see \cite{JB,PS,SD,SDW}.

\section{Preliminaries}

Let $W=W(x)$ be a   weight matrix of size $R$ in the open interval $(a,b)$. By this we mean
a complex $R \times R$-matrix valued integrable function $W$ on the interval $(a, b)$ such that
$W (x)$ is positive definitive almost everywhere and with finite moments of all orders.
 Let $Q_w(x), w=0,1,2,...$, be a sequence of real valued matrix orthonormal polynomials with respect to the weight $W(x)$.
 Consider the following two Hilbert spaces: the
space $L^2((a,b), W(t)dt)$, denoted here by $L^2(W)$, of all matrix valued measurable matrix valued functions $f(x)$, $x\in (a,b)$, satisfying
$\int_a^b \tr\left(f(x)W(x)f^*(x)\right)dx < \infty $ and the space $\ell^2(M_R, \NN_0)$
of all real valued $R\times R$ matrix sequences $(C_w)_{w\in \NN_0}$ such that $\sum_{w=0}^\infty \tr \left(
C_w\,C_w^*\right) < \infty$.

The map $\mathcal F:\ell^2(M_R,\NN_0) \longrightarrow L^2(W)$ given by
$$(A_w)_{w=0}^\infty \longmapsto  \sum_{w=0}^\infty A_w Q_w(x)$$
is an isometry. If the polynomials are dense in $L^2(W)$, this map is unitary with the inverse $\mathcal F^{-1}: L^2(W)\longrightarrow \ell^2(M_R,\NN_0) $ given by
$$ f \longmapsto A_w=\int_a^b f(x)\,W(x)\, Q^*_w(x) dx.$$

We denote our map by $\mathcal F$ to remind ourselves of the usual Fourier transform. Here $\NN_0$ takes up the role of ``physical space"
and the interval $(a,b)$ the role of ``frequency space". This is, clearly,
a noncommutative extension of the problem raised by C. Shannon since he was concerned with scalar valued functions and we are dealing with matrix valued ones.

The {\em time limiting  operator}, at level $N$, acts on $\ell^2(M_R,\NN_0)$ by simply setting equal to zero all the components with index larger than $N$. We denote it by $\chi_N$. The {\em band limiting operator}, at level $\Omega$,  acts on $L^2(W)$ by multiplication by the characteristic function of the interval $(a, \Omega)$, $\Omega\le b$. This operator will be denoted by $\chi_\Omega$.
One could consider restricting the band to an arbitrary subinterval $(a_1,b_1)$. However, the algebraic properties exhibited here, see Section 4  and beyond, hold only with
 this restriction. A similar situation arises in the classical case going all the way back to Shannon.

\medskip
Consider the problem of determining a function $f$,  from the following data: $f$ has support on the finite set $\{0,\dots , N\}$ and its Fourier transform $\mathcal Ff$ is known on the compact set $[a,\Omega]$. This can be formalized as follows
$$\chi_\Omega \mathcal Ff=g=\text{ known },\qquad  \chi_N f=f.$$
We can combine the two equations into
$$E f= \chi_\Omega \mathcal F \chi_N f=g.$$
To analyze this problem we need to compute the singular vectors (and values) of the operator $E:\ell^2(M_R,\NN_0)\longrightarrow L^2(W) $.
These are given by the eigenvectors of the operators
$$E^*E= \chi_N \mathcal F^{-1} \chi_\Omega \mathcal F \chi_N\qquad \text{ and } \qquad S_2=E E^*= \chi_\Omega \mathcal F \chi_N \mathcal F^{-1} \chi_\Omega.$$
The operator $E^*E$, acting in $\ell^2(M_R,\NN_0)$ is just a finite dimensional block-matrix $M$, and each block is given by
$$(M)_{m,n}=(E^*E)_{m,n}= \int_a^\Omega Q_m(x) W(x) Q{^*}_n(x)  dx, \qquad 0\leq m,n \leq N.$$
The second operator $S= E E^*$ acts in $L^2((a,\Omega), W(t)dt)$ by means of the integral kernel
$$k(x,y)=\sum_{n=0}^N Q_n^*(x)Q_n(y).$$
Consider now the problem of finding the eigenfunctions of $E^*E$ and $E E^*$. For arbitrary $N$ and $\Omega$ there is no hope of doing this analytically, and one has to resort to numerical methods and this is not an easy problem. Of all the strategies one can dream of for solving this problem, none sounds so appealing as that of finding an operator with simple spectrum which would have the same eigenfunctions as the original operators. This is exactly what Slepian, Landau and Pollak did in the scalar case, when dealing with the real line and the actual Fourier transform. They discovered (the analog of) the following properties:
\begin{itemize}
  \item For each $N$, $\Omega$ there exists a symmetric tridiagonal matrix $L$, with simple spectrum, commuting with $M$.
  \item For each $N$, $\Omega$ there exists a selfadjoint differential operator $D$, with simple spectrum, commuting with the integral operator $S=EE^*$.
\end{itemize}

\bigskip

To this day nobody has a simple explanation for these miracles, and this paper displays more instances where this holds. Indeed, there has been a systematic effort
to see if the ``bispectral property" first considered in \cite{DG}, guarantees the commutativity of  these
two operators, a global and a local one. A few papers where this question has been taken up, include \cite{G3,G4,G5,G6,GLP,GY,P1,P2}.

\bigskip

We recall that while  \cite{CG5,GPZ1} deal with the full matrix $E^*E$ alluded to above, here, as well as in \cite{GPZ2}, we deal with the integral operator $S=EE^*$ mentioned above.

\section{An example of matrix valued orthogonal polynomials}

For $\alpha,\beta > -1$, the scalar Jacobi weight is given by
\begin{equation}\label{jacobiesc}w_{\alpha,\beta}(x)=(1-x)^{\alpha}(1+x)^{\beta},\end{equation} supported in the interval $[-1,1]$.
In this paper we consider a Jacobi type weight matrix of dimension two
\begin{equation}\label{peso}
W(x)=W_{(\alpha,\beta)}=\frac{1}{2}\left(\begin{array}{cc}w_{\alpha,\beta}+w_{\beta,\alpha}&-w_{\alpha,\beta}+w_{\beta,\alpha}\\ -w_{\alpha,\beta}+w_{\beta,\alpha}&
w_{\alpha,\beta}+w_{\beta,\alpha} \end{array} \right), \quad x\in [-1,1].
 \end{equation}

\

Let us observe that a particular case of these weight matrices have been studied in \cite{GPZ1, GPZ2} and \cite{CG5}.
In fact, the  weight matrix considered in  \cite{PZ, GPZ1, GPZ2} is
\begin{equation}\label{peso-x}
  W_{p,m}(x)= (1-x^2)^{\tfrac m 2 -1} \begin{pmatrix}
  p\,x^2+m-p & -m x\\ -m x & (m-p)x^2+p
\end{pmatrix},\qquad x\in [-1,1].
\end{equation}

\noindent Taking $\alpha=p-1$, $\beta=p+1$ in \eqref{peso}, we obtain a multiple of the previous weight for the special case $m= 2p$.

\medskip

On the other hand, taking $\beta=\alpha-1$ in \eqref{peso} we  obtain a linear translation  $x\longrightarrow 1+x$ of the weight considered in \cite{CG5},
\begin{equation}\label{pesoant}
W_{\lambda}(x)=[x(2-x)]^{\lambda-3/2}\left(\begin{array}{cc}1&x-1\\x-1&1 \end{array} \right), \quad x\in [0,2],
 \end{equation}
with $\lambda=\alpha+\displaystyle \frac{1}{2}$.


\medskip

A sequence of matrix orthogonal polynomials with respect to the matrix valued inner product going with (\ref{peso}), is given by
\begin{equation} \label{Pwdef}
P_n(x)=P_n^{(\alpha,\beta)}(x)=\frac12\begin{pmatrix}
p_n^{(\alpha,\beta)}(x)+p_n^{(\beta,\alpha)}(x)&-p_n^{(\alpha,\beta)}(x)+p_n^{(\beta,\alpha)}(x)
\\
-p_n^{(\alpha,\beta)}(x)+p_n^{(\beta,\alpha)}(x) &p_n^{(\alpha,\beta)}(x)+p_n^{(\beta,\alpha)}(x)
\end{pmatrix},
\end{equation}
 where $p_n^{(\alpha,\beta)}$ are  the classical Jacobi polynomials %
 $$p_n^{(\alpha,\beta)}= \frac{(\alpha+1)_n}{n!}\mbox{}_2\!F_1 \left(\begin{matrix}-n,\,n+\alpha+\beta\\ \alpha+1\end{matrix};\frac{1-x}{2}\right),  $$
which are orthogonal with respect to the weight $w_{\alpha,\beta}$ 
(see for instance \cite[Chapter VI]{Sz}).

\medskip
The norm of the polynomials $P_n(x)$ is given by
$$
 \langle P_n, P_n \rangle= ||P_n(x)||^2= \int_{-1}^1 P_n(x) W(x) P_n(x)^* \,dx =h_n\mathrm{Id},
$$
%
where $\mathrm{Id}$ is the matrix identity of  size $2\times 2$ and
 \begin{equation} \label{norma}
    h_n=\frac{2^{\alpha+\beta+1}\,\Gamma(\alpha+n+1)\,\Gamma(\beta+n+1)}{(\alpha+\beta+2n+1) \,n!\;\Gamma(\alpha+\beta+n+1)\, }.
 \end{equation}

\


The sequence of orthogonal polynomials $(P_n)_{n}$ 
satisfies the three term recurrence relation
$$
xP_n(x)=A_{n}P_{n+1}(x)+B_nP_n(x)+C_nP_{n-1}(x),\quad n\geq 1,
$$
where \begin{align*}
A_n=&\frac{2(n+1)(n+\alpha+\beta+1)}{(2n+\alpha+\beta+1)(2n+\alpha+\beta+2)}\mathrm{Id},&
B_n=&\frac{\alpha^2-\beta^2}{(2n+\alpha+\beta)(2n+\alpha+\beta+2)}T,\\
C_n=&\frac{2(\alpha+n)(\beta+n)}{(2n+\alpha+\beta)(2n+\alpha+\beta+1)}\mathrm{Id},&
\end{align*}
with
\begin{equation} \label{ss} T=\left(\begin{array}{cc}0&1\\1&0 \end{array}\right).\end{equation}

The sequence of matrix orthogonal polynomials $(P_n)_n$ 
satisfies the following differentiation formula
\begin{equation}\label{difform}
 (1-x^2)\frac{d}{dx}P_n(x)= -nxP_n(x) -\frac{n(\alpha-\beta)}{\alpha+\beta+2n}
 T\, P_n(x)
 +\gamma_{n-1} P_{n-1}(x),
\end{equation}
where
\begin{equation}\label{gama}
\gamma_{n-1}= \frac{2(n+\alpha)(n+\beta)}{\alpha+\beta+2n}.
\end{equation}

\medskip

We  also have  the following Christoffel-Darboux formula for the sequence of orthogonal polynomials $(P_n(x))_n$, introduced for a general sequence of matrix orthogonal polynomials in \cite{D96}.

\begin{equation}\label{cd}
\frac{\kappa_{n-1}}{\kappa_{n}\,h_{n-1}}\left(P_{n-1}^*(y)P_{n}(x)-
   P_{n}^*(y)P_{n-1}(x)\right)=(x-y)
   \sum_{k=0}^{n-1}\frac{P_{k}^*(y)P_{k}(x)}{h_k},
\end{equation}
with
$$\kappa_n=\frac{\Gamma(\alpha+\beta+2n+1)}{2^n\, n! \,\Gamma(\alpha+\beta+n+1)}.$$
Observe that $\kappa_n \textrm{Id}$ is the leading coefficient of the matrix polynomial $P_n(x)$ and we also have 
$$\frac{\kappa_{n-1}}{\kappa_n}=\frac{2n (n+\alpha+\beta)}{(2n+\alpha+\beta)(2n+\alpha+\beta-1)}.$$

\medskip
The matrix polynomial  $P_n$, for each $n \geq 0$, is an eigenfunction of the second order differential operator
\begin{equation} \label{oporiginal}
D=\frac{d^2}{dx^2}(1-x^2)+\frac{d}{dx}\left(-x(\alpha+\beta+2) + (\alpha-\beta)T \right),
\end{equation}
with scalar eigenvalues $\Lambda_n=-n(n+\alpha+\beta+1)$.
We have that the differential operator $D$ can be factorized as
$$ D=  \frac{d}{dx}\left(\frac{d}{dx} \, (1-x^2)W(x)\right)W(x)^{-1},$$
 and therefore the sequence of matrix orthogonal polynomials $(P_n(x))_n$ satisfies
\begin{equation}\label{secord}
 \frac{d}{dx}\left(\frac{dP_n}{dx}(x) \, (1-x^2)W(x)\right)W(x)^{-1}= \Lambda_nP_n(x).
\end{equation}

\medskip


\

%
\section{Time and band limiting. Integral and differential operators}\label{Definition Doo}

Given a sequence of matrix orthonormal  polynomials $\{Q_w\}_{w\geq 0}$ with respect to the weight $W$, we fix a natural number $N$ and $\Omega \in (-1,1)$ and
 we consider the integral kernel
\begin{equation}\label{kernejm}
  k(x,y)=\sum_{w=0}^N Q_w^*(x)Q_w(y).
\end{equation}

\noindent  It defines the integral operator  $S$ acting on $L^2((-1,\Omega), W)$ ``from the right-hand side'':
\begin{equation}\label{Iop}
   (fS)(x)=\int_{-1}^\Omega f(y)W(y)\big(k(x,y)\big)^*dy.
\end{equation}

The restriction to the interval $[1, \Omega]$ implements ``band-limiting'' while the restriction to the
range $0, 1, \ldots,N$ takes care of ``time-limiting''. In the language of \cite{GLP}, where the authors were dealing
with scalar valued functions defined on spheres, the first restriction gives a ``spherical cap'' while
the second one amounts to truncating the expansion in spherical harmonics.

\

We search for a selfadjoint differential operator $\widetilde{D}$, defined
in  $[-1, \Omega]$, commuting with the integral operator $S$.
\medskip

The main result of this section is the following

\begin{thm}\label{conmutador}
Let $p(x)=(1-x^2)W(x)$.
The symmetric second-order differential operator
\begin{equation}\label{opconmuta}
\widetilde{D}=\frac{d}{dx}\left((x-\Omega)\frac{d}{dx}p(x)\right)W(x)^{-1}+ x \,A,
\end{equation}
with $A=N(N+\alpha+\beta+2)\mathrm{ Id}$,
  commutes with 
  the integral operator $S$  given  in \eqref{Iop}. 
  \end{thm}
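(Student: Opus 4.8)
The plan is to show that the commutator $[\widetilde D, S]$, which is an integral operator, has a kernel that vanishes identically. Concretely, since $S$ acts from the right via the kernel $k(x,y) = \sum_{w=0}^N Q_w^*(x) Q_w(y)$ on $L^2((-1,\Omega),W)$, the composition $\widetilde D S$ and $S\widetilde D$ are again integral operators, and the essential point is to check that $\widetilde D$ acts the same way on the kernel whether applied in the $x$ variable or (in a suitably transposed/adjoint sense) in the $y$ variable. The standard strategy here — going back to the Slepian--Landau--Pollak mechanism and used in \cite{GPZ2} — is to find a differential operator $\widetilde D$ such that $\widetilde D_x\, k(x,y) = \widetilde D_y^{\,*}\, k(x,y)$ (with appropriate account of $W$ and the right-handed action), because then for any $f$ one gets $(f S)\widetilde D = f(S\widetilde D)$ by integrating by parts and using that the boundary terms at $x=-1$ and $x=\Omega$ drop out.

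First I would reduce everything to the polynomials $P_n$ of Section 3 via the normalization $Q_w = h_w^{-1/2} P_w$, so that $k(x,y) = \sum_{w=0}^N h_w^{-1} P_w^*(x) P_w(y)$. The key input is the Christoffel--Darboux formula \eqref{cd}, which expresses this sum as
\[
\sum_{k=0}^{N} \frac{P_k^*(y)P_k(x)}{h_k} = \frac{\kappa_{N}}{\kappa_{N+1}\,h_{N}}\cdot\frac{P_{N}^*(y)P_{N+1}(x)-P_{N+1}^*(y)P_{N}(x)}{x-y}.
\]
The appearance of the factor $(x-y)^{-1}$ is exactly why the operator $\widetilde D$ in \eqref{opconmuta} contains the factor $(x-\Omega)$ rather than $(1-x^2)$: one wants $\widetilde D$ to be built from the ``one-sided'' analogue of the original Bochner operator $D$ from \eqref{oporiginal}, shifted so that its natural singular point sits at the band edge $\Omega$. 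The plan is then: substitute the Christoffel--Darboux expression into $\widetilde D_x k - \widetilde D_y^* k$; use the differentiation formula \eqref{difform} and the three-term recurrence to rewrite all derivatives of $P_N, P_{N+1}$ in terms of $P_{N-1}, P_N, P_{N+1}, P_{N+2}$; and verify that the constant $A = N(N+\alpha+\beta+2)\mathrm{Id}$ is precisely the value that makes the leftover terms cancel. The second-order Bochner equation \eqref{secord} with eigenvalues $\Lambda_n = -n(n+\alpha+\beta+1)$ will be used to simplify $\widetilde D$ applied to $P_N$ and $P_{N+1}$ individually, since $\widetilde D$ differs from $D$ by the rescaling $(1-x^2)\mapsto (x-\Omega)$ plus the zeroth-order term $xA$.

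I would organize the computation as follows. Write $\widetilde D = D_\Omega + xA$ where $D_\Omega = \frac{d}{dx}\big((x-\Omega)\frac{d}{dx}p(x)\big)W(x)^{-1}$. One first checks that $D_\Omega$ is symmetric on $[-1,\Omega]$ with respect to $W$: this is where the factor $(x-\Omega)$ earns its keep, because $p(\Omega)\neq 0$ in general but $(x-\Omega)$ vanishes at the right endpoint and $(1-x^2)$ (a factor of $p$) vanishes at the left endpoint $x=-1$, so all boundary terms in the two integrations by parts vanish; the term $xA$ is manifestly symmetric since $A$ is a scalar multiple of the identity. Then, acting on $P_n$, one computes $D_\Omega P_n$ using $(x-\Omega) = (x-1) + (1-\Omega)$ or $(x-\Omega)=-(1-x^2)/(1+x) + \text{lower order}$ — more cleanly, write $(x-\Omega)\frac{d}{dx}p = (1-x^2)\frac{d}{dx}p - (\Omega+x)(1-x^2)^{-1}\cdot(\text{stuff})$ — and match against \eqref{secord}. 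The cleanest route is probably: $\widetilde D P_n = \Lambda_n P_n + (\text{terms from replacing } (1-x^2) \text{ by } (x-\Omega)) + xA P_n$, and then insert into the Christoffel--Darboux combination $P_N^*(y)P_{N+1}(x)-P_{N+1}^*(y)P_N(x)$ and its division by $x-y$, exploiting that $\widetilde D_x - \widetilde D_y^*$ kills anything symmetric in a suitable sense.

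The main obstacle, and where the real work lies, is the non-scalar structure: the operators $D$ and $\widetilde D$ carry the matrix $T = \left(\begin{smallmatrix}0&1\\1&0\end{smallmatrix}\right)$ in their first-order coefficient, and $P_n$ itself is a genuine matrix polynomial, so one cannot simply quote the scalar Slepian--Landau--Pollak identity term by term; one must track how $T$ interacts with the $P_n^*(y)$ factors (note \eqref{difform} has $T P_n$ on the left but after conjugation becomes $P_n^* T$), and verify that the cross terms involving $(\alpha-\beta)$ cancel between the $x$-action and the $y^*$-action. A secondary subtlety is the bookkeeping around $x=-1$: since $p(x) = (1-x^2)W(x)$ and $W$ has the Jacobi-type singularity $(1-x^2)^{\text{power}}$ built in, one must confirm that $(x-\Omega)\frac{d}{dx}p(x)$ times $W^{-1}$ is genuinely a polynomial-coefficient (hence honestly symmetric, no hidden boundary contribution) differential operator — this follows from the factorization $D = \frac{d}{dx}(\frac{d}{dx}(1-x^2)W)W^{-1}$ already recorded in the excerpt, applied with the extra $(x-\Omega)/(1-x^2)$ handled carefully. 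Once the matrix identity $\widetilde D_x k(x,y) = (\widetilde D_y k(x,y))^*$ (in the appropriate sense dictated by the right-handed action \eqref{Iop}) is established, the commutation $S\widetilde D = \widetilde D S$ follows immediately from the symmetry of $\widetilde D$ and two integrations by parts with vanishing boundary terms.
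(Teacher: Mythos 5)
Your overall mechanism is the paper's: reduce commutation to the kernel identity $\left(k(x,y)^*\right)\widetilde D_x=(k(x,y)\widetilde D_y)^*$ (which, together with symmetry of $\widetilde D$ on $[-1,\Omega]$ and the vanishing of the boundary terms at $x=-1$ and $x=\Omega$, is exactly \cite[Proposition 3.1]{GPZ2}, the criterion the paper invokes), and then verify that identity using the Christoffel--Darboux formula \eqref{cd}, the first-order differentiation formula \eqref{difform}, and the eigenvalue equation \eqref{secord}. Where you genuinely diverge is in the order of operations on the kernel. You propose to first collapse $k(x,y)=\sum_{n=0}^N Q_n^*(x)Q_n(y)$ into the two-term Christoffel--Darboux quotient involving $P_N$, $P_{N+1}$ and $(x-y)^{-1}$, and only then apply $\widetilde D_x-\widetilde D_y^*$; this is the classical route of \cite{G3} and it forces you to differentiate through the singular factor $(x-y)^{-1}$, producing $(x-y)^{-2}$ and $(x-y)^{-3}$ terms whose cancellation must be tracked in a noncommutative setting. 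The paper goes the other way: it applies $\widetilde D$ (written as $(x-\Omega)D+(1-x^2)\tfrac{d}{dx}+xA$) term by term to the sum, so that \eqref{difform} produces lowering terms $\tilde\gamma_{n-1}\bigl(Q_n^*(y)Q_{n-1}(x)-Q_{n-1}^*(y)Q_n(x)\bigr)$ that are manifestly divisible by $(x-y)$ via \eqref{cd}, then exchanges the order of summation to telescope $\sum_{j=n+1}^N(\alpha+\beta+2j+1)=N(N+\alpha+\beta+2)-n(n+\alpha+\beta+2)$, which is precisely what singles out $A=N(N+\alpha+\beta+2)\mathrm{Id}$ against $\Lambda_n+A-n$. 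This avoids all negative powers of $(x-y)$ and makes the determination of $A$ transparent rather than the outcome of a long cancellation.

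The one substantive reservation: your proposal stops exactly where the real content of the theorem lies. The statement ``verify that the constant $A$ is precisely the value that makes the leftover terms cancel'' is the entire theorem, and your chosen organization makes that verification strictly harder than the paper's. In particular, the intermediate manipulation you sketch for $D_\Omega P_n$ (rewriting $(x-\Omega)\tfrac{d}{dx}p$ via $(x-\Omega)=-(1-x^2)/(1+x)+\dots$) is not a well-formed identity as written, and the cancellation of the $(\alpha-\beta)T$ cross terms --- which in the paper's computation drops out immediately because the $S$-terms (i.e.\ $T$-terms) appear identically in both $(k^*)\widetilde D_x$ and $(k\widetilde D_y)^*$ --- would need a separate argument in your two-term framework, since there $T$ sits between $P_N^*(y)$ and $P_{N+1}(x)$ in ways that do not obviously pair up. So the strategy is sound and the ingredients are the right ones, but as it stands the proof is a plausible program, not a proof; I would either carry out the $(x-y)^{-1}$ computation in full or switch to the paper's term-by-term ordering, where the required identity reduces to the elementary check $\Lambda_n+N(N+\alpha+\beta+2)-n=N(N+\alpha+\beta+2)-n(n+\alpha+\beta+2)$.
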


\

 Explicitly, we have
\begin{equation}\label{opD}
\widetilde{D}=\frac{d^2}{dx^2}\widetilde{E}_2(x)+\frac{d}{dx}\widetilde{E}_1(x)+\widetilde{E}_0(x)
\end{equation}
where the coefficients $\widetilde{E}_j,\ j=0,1,2$, are given by
\begin{align*}
\widetilde{E}_2&=(x-\Omega)(1-x^2)\mathrm{Id},\\
\widetilde{E}_1&=\Big(-(3+\alpha+\beta) x^2+x\,\Omega(2+\alpha+\beta)+1\Big)\mathrm{Id}+(\alpha-\beta)(x-\Omega)T,\\
\widetilde{E}_0& =x\, N(N+\alpha+\beta+2)\mathrm{Id},
\end{align*}
and $T$ is the permutation matrix given in \eqref{ss}.

Let us observe that the differential operator $\widetilde D$ is somehow related to the differential operator $D$, given in \eqref{oporiginal} and \eqref{secord}. Explicitly, we have
 $$\widetilde D= (x-\Omega) D+ (1-x^2)\frac d{dx}+ x A.$$

\medskip


We first show that the operator in (\ref{opconmuta}) is indeed symmetric with respect to the (matrix valued) inner product defined by (\ref{peso}).

\medskip


\begin{prop}
 The differential operator $\widetilde D$ is  a  symmetric operator with respect to
\begin{equation}\label{W-trunc}
  \langle f,g\rangle_\Omega=\int_{-1}^\Omega f(x)W(x)g^*(x)\, dx.
\end{equation}
\end{prop}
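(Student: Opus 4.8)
The plan is to work directly with the factored form \eqref{opconmuta} and integrate by parts only once; this is enough because the ``middle factor'' $p(x)=(1-x^2)W(x)$ is Hermitian. As throughout the paper, $\widetilde D$ acts on the right, so that on a matrix polynomial $f$ one has
\[
(f\widetilde D)(x)=\Big(\tfrac{d}{dx}\big((x-\Omega)\,f'(x)\,p(x)\big)\Big)W(x)^{-1}+x\,f(x)\,A ,
\]
which matches the expansion \eqref{opD} once one uses $pW^{-1}=(1-x^2)\mathrm{Id}$ and the identity $\big((1-x^2)W\big)'W^{-1}=-x(\alpha+\beta+2)\mathrm{Id}+(\alpha-\beta)T$ forced by comparing \eqref{oporiginal} with the factorization of $D$. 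Since $\widetilde D$ has polynomial coefficients it preserves matrix polynomials, and these are dense in $L^2((-1,\Omega),W)$, so it suffices to verify $\langle f\widetilde D,g\rangle_\Omega=\langle f,g\widetilde D\rangle_\Omega$ for $f,g$ polynomials.

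Write $f\widetilde D=f\mathcal L+x\,f\,A$ with $f\mathcal L:=\big((x-\Omega)f'p\big)'W^{-1}$. The zeroth-order part is handled at once: since $A=N(N+\alpha+\beta+2)\mathrm{Id}$ is a real scalar multiple of the identity, $A^*=A$ and $WA=AW$, so $\int_{-1}^\Omega (xfA)Wg^*\,dx=\int_{-1}^\Omega xfWAg^*\,dx=\int_{-1}^\Omega fW(xgA)^*\,dx$. For $\mathcal L$ the factor $W^{-1}W$ cancels against the weight, giving
\[
\langle f\mathcal L,g\rangle_\Omega=\int_{-1}^\Omega\big((x-\Omega)f'(x)p(x)\big)'\,g^*(x)\,dx ,
\]
and one integration by parts produces the boundary term $\big[(x-\Omega)f'pg^*\big]_{-1}^{\Omega}$ plus $-\int_{-1}^\Omega (x-\Omega)f'(x)p(x)(g'(x))^*\,dx$.

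The step that needs care is the vanishing of that boundary term at both ends. At $x=\Omega$ it dies because of the factor $(x-\Omega)$ --- this is exactly why the cutoff is placed as $(x-\Omega)$ and why the band must be an interval of the form $(-1,\Omega)$. At $x=-1$ it dies because $p(-1)=0$: although $W$ itself may blow up there, the entries of $(1-x^2)W(x)$ carry the factors $(1+x)^{\alpha+1}$ and $(1+x)^{\beta+1}$ with $\alpha,\beta>-1$, so $p$ extends continuously by $0$ to $x=-1$. Hence $\langle f\mathcal L,g\rangle_\Omega=-I(f,g)$, where $I(f,g):=\int_{-1}^\Omega (x-\Omega)f'(x)p(x)(g'(x))^*\,dx$.

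Finally, $p^*=p$ and $(x-\Omega)$ is real, so taking conjugate transpose inside the integral interchanges the two arguments of $I$, i.e.\ $I(f,g)^*=I(g,f)$. Combining this with the Hermitian symmetry $\langle u,v\rangle_\Omega^*=\langle v,u\rangle_\Omega$ of the matrix-valued inner product (which uses only $W^*=W$) gives
\[
\langle f,g\mathcal L\rangle_\Omega=\big(\langle g\mathcal L,f\rangle_\Omega\big)^*=\big(-I(g,f)\big)^*=-I(f,g)=\langle f\mathcal L,g\rangle_\Omega ,
\]
and adding back the symmetric zeroth-order term proves the Proposition. One could instead verify the standard three symmetry equations tying $\widetilde E_0,\widetilde E_1,\widetilde E_2$ to $W$ together with the vanishing of the boundary data, using the same Pearson-type identity; the single integration by parts above is merely shorter.
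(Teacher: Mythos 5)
Your proof is correct and follows essentially the same route as the paper: both exploit the divergence form of $\widetilde D$, integrate by parts, and use that $(x-\Omega)p(x)$ vanishes at both endpoints $x=-1$ and $x=\Omega$. The only difference is cosmetic --- you stop after a single integration by parts at the manifestly symmetric form $-\int_{-1}^{\Omega}(x-\Omega)f'p\,(g')^*\,dx$ and invoke $\langle u,v\rangle_\Omega^*=\langle v,u\rangle_\Omega$, whereas the paper integrates by parts a second time to move the operator fully onto $g$; along the way you also make explicit some points the paper leaves tacit (why $p(-1)=0$ given only $\alpha,\beta>-1$, the treatment of the zeroth-order term $xA$, and the reduction to polynomials by density).
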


\begin{proof}
For an appropriate dense set of functions $f,g$, we have
\begin{align*}
  \langle f\widetilde D, g\rangle_\Omega & = \int_{-1}^\Omega \frac{d}{dx} \left( (x-\Omega)\frac{d f}{dx}(x)\, p(x)\right)g^*(x)\, dx+  \int_{-1}^\Omega xf(x)AW(x)g^*(x) \,dx \displaybreak[0]\\
  & = - \int_{-1}^\Omega\frac{d f}{dx}(x)\,  (x-\Omega) p(x)\frac{dg^*}{dx}(x)\, dx+ { (x-\Omega)\frac{d f}{dx}(x)\, p(x)g^*(x)\vline}^{\, \Omega}_{-1}\\
  &\quad + \int_{-1}^\Omega xf(x)AW(x)g^*(x) \,dx \displaybreak[0]\\
  & =  \int_{-1}^\Omega f(x)\,\frac{d }{dx} \Big ( (x-\Omega) p(x)\frac{dg^*}{dx}(x)\Big)\, dx - {f(x)(x-\Omega)p(x)\frac{dg^*}{dx}(x)\vline}^{\, \Omega}_{-1}  \\
  & \quad + { (x-\Omega)\frac{d f}{dx}(x)\, p(x)\frac{dg^*}{dx}(x)\vline}^{\, \Omega}_{-1}+ \int_{-1}^\Omega xf(x)AW(x)g^*(x) \,dx .\\
\end{align*}
Since the factor $(x-\Omega)p(x)$ vanishes at $x=-1$ and $x=\Omega$ we get
\begin{align*}
  \langle f\widetilde D, g\rangle_\Omega & = \int_{-1}^\Omega f(x)W(x)\, \Big (\frac{d }{dx}  \left[(x-\Omega)\frac{d g}{dx}(x)p(x)\right]W^{-1}\Big)^*(x)\, dx
  + \int_{-1}^\Omega xf(x)W(x)\left(g(x)A\right)^* \,dx
\end{align*}
Therefore
$$  \langle f\widetilde D, g\rangle_\Omega = \langle f, g\widetilde D \rangle_\Omega .$$
Completing the proof of the proposition.
\end{proof}


\begin{proof}[{\it  Proof of  Theorem \ref{conmutador}}]
From \cite[Proposition 3.1]{GPZ2} we have that
a symmetric differential operator $\widetilde D$ commutes with  an  integral operator $S$ with kernel $k$ if and only if
 $$  \left( k(x,y)^*\right)\widetilde D_x= (k(x,y)\widetilde D_y)^*.$$
 (Here we use $D_x$ to stress that $D$ acts on the variable $x$).

 Let $D$ be the differential operator introduced in \eqref{oporiginal} and let $(Q_n)_n$ be the orthonormal sequence of matrix valued polynomials given by
  $$Q_n(x)=h_n^{-1/2} \, P_n(x),$$
where $h_n=\langle P_n, P_n\rangle$ is explicitly  given in \eqref{norma}.

These polynomials are eigenfunctions of the differential operator $D$, i.e. $Q_n D=\Lambda_n Q_n$, with $\Lambda_n=-n(n+\alpha+\beta+1)$ and they satisfy
\begin{equation*}
 (1-x^2)\frac{d}{dx}Q_n(x)= -nxQ_n(x) -\frac{n(\alpha-\beta)}{\alpha+\beta+2n}
 S\, Q_n(x) +\tilde \gamma_{n-1} Q_{n-1}(x).
\end{equation*}
with
$$\tilde \gamma_{n-1}=\frac{2(n+\alpha)(n+\beta)}{\alpha+\beta+2n} \frac{h_{n-1}^{1/2}}{h_{n}^{1/2}}.$$
Then
\begin{align*}
  (k(x,y)\widetilde D_y )^*
  &=(y-\Omega)\sum_{n=0}^N  Q_n^*(y)\Lambda_n Q_n(x)+(1-y^2) \sum_{n=0}^N  \frac{d}{dy}Q_n^*(y) Q_n(x)
  +y\, \sum_{n=0}^N  A Q_n^*(y)Q_n(x) \displaybreak[0]\\
  & = (y-\Omega)\sum_{n=0}^N \Lambda_n Q_n^*(y) Q_n(x)+ y \sum_{n=0}^N A Q_n^*(y)Q_n(x)
  \\ &
 \qquad + \sum_{n=0}^N \left( -y  n Q_n^*(y)Q_n(x) -  \tfrac{n(\alpha-\beta)}{(\alpha+\beta+2n)} Q_n^*(y) S Q_n(x) + \gamma_{n-1} Q_{n-1}^*(y)Q_n(x) \right)
\end{align*}
and similarly
\begin{align*}
 \left( k(x,y)^*\right)\widetilde D_x
  &= 
   (x-\Omega)\sum_{n=0}^N \Lambda_n Q_n^*(y) Q_n(x)+ x \sum_{n=0}^N A Q_n^*(y)Q_n(x)
  \\ &
 \qquad + \sum_{n=0}^N \left( -x  n Q_n^*(y)Q_n(x) -\displaystyle  \tfrac{n(\alpha-\beta)}{(\alpha+\beta+2n)} Q_n^*(y) S Q_n(x) +\tilde \gamma_{n-1} Q_{n-1}^*(y)Q_n(x) \right).
\end{align*}
Thus
\begin{equation}
\begin{aligned}\label{a}
( k^*\widetilde D_x)- ( k\widetilde D_y)^*&= (x-y) \sum_{n=0}^N (\Lambda_n+A-n) Q_n^*(y)Q_n(x) \\ & \qquad +  \sum_{n=0}^N  \tilde \gamma_{n-1} \Big(Q_n^*(y)Q_{n-1}(x)-Q_{n-1}^*(y)Q_n(x)  \Big).
\end{aligned}
\end{equation}

From the Cristoffel-Darboux formula, given in \eqref{cd}, we obtain that
\begin{equation*}
\tilde \gamma_{n-1} \Big( Q_{n-1}^*(y)Q_{n}(x)-
   Q_{n}^*(y)Q_{n-1}(x)\Big)
=(x-y)\,\tilde \gamma_{n-1} \frac{\kappa_{n}\,h_{n-1}^{1/2}} {\kappa_{n-1} h_n^{1/2}}\,
   \sum_{k=0}^{n-1}Q_{k}^*(y)Q_{k}(x).
\end{equation*}
It is easy to verify that
$\displaystyle \tilde \gamma_{n-1} \frac{\kappa_{n}\,h_{n-1}^{1/2}} {\kappa_{n-1} h_n^{1/2}}= \alpha+\beta+2n+1.$
Therefore, by exchanging the order of summation we get
\begin{align*}
\sum_{n=0}^N \tilde \gamma_{n-1} \Big(&  Q_{n-1}^*(y)Q_{n}(x) -
   Q_{n}^*(y)Q_{n-1}(x)\Big)
=(x-y) \sum_{n=0}^N  (\alpha+\beta+2n+1)
   \sum_{k=0}^{n-1}Q_{k}^*(y)Q_{k}(x) \displaybreak[0]\\
&=   (x-y) \sum_{n=0}^{N-1} \sum_{j=n+1}^{N}  (\alpha+\beta+2j+1)    Q_{n}^*(y)Q_{n}(x) \displaybreak[0]\\
&=   (x-y) \sum_{n=0}^{N-1} \big(  N(N+\alpha+\beta+2)-n(n+\alpha+\beta+2)\big)    Q_{n}^*(y)Q_{n}(x).
\end{align*}

Now from \eqref{a},  and using that $\Lambda_n+A-n= -n(n+\alpha+\beta+2)+N(N+\alpha+\beta+2)$ we  get
$$ (k(x,y)\widetilde D_y )^* =  \left( k(x,y)^*\right)\widetilde D_x. $$
Hence the operators $\widetilde D$ and $S$ commute.
\end{proof}

\medskip

 \begin{remark}
    It is worth noticing that if one takes  the new operator $\widetilde{D}^{(1)}=\widetilde{D}T$, one obtains an  operator commuting with the integral operator $S$ in (\ref{Iop}), linearly independent with the operator $\widetilde{D}$. The space of  the differential operators of order two commuting with $S$ is generated by $T$, $\widetilde{D}T$ and $\widetilde{D}$.
  \end{remark}

\section{A Chebyshev type example}

As a particular example, if we put $\alpha=\displaystyle \frac{1}{2}$, $\beta=-\displaystyle \frac{1}{2}$  in (\ref{peso}), we have the Chebyshev type weight
\begin{equation}\label{pesoCheb}
W_{(\frac{1}{2},-\frac{1}{2})}(x)=\frac{1}{\sqrt{1-x^2}}\left(\begin{array}{cc}1&x\\x&1 \end{array} \right), \quad x\in [-1,1],
 \end{equation}
 which was introduced in  \cite[Page 586]{Be}
 for a different purpose.

 The monic family of polynomials orthogonal with respect to this weight matrix, is given explicitly in terms of the Chebyshev polynomials of the second kind $U_n(x)$,
 $$ \widetilde{P}_n(x)=\frac{1}{2^n}\left( \begin{array}{cc}U_n(x)&-U_{n-1}(x)\\-U_{n-1}(x)&U_n(x)  \end{array}\right), $$
and it satisfies a first order differential equation as pointed out in \cite{CG1,CG2}.

Moreover, the polynomials $P_n(x)=P^{(\alpha,\alpha-1)}_n(x)$ in \eqref{Pwdef}
satisfy, for any $\alpha>0$, the first order differential equation
$$
P_n'(x)\begin{pmatrix}-x&1\\-1&-x \end{pmatrix}
+P_n(x)\begin{pmatrix} -2\alpha&0\\0&0 \end{pmatrix}=\begin{pmatrix}-2\alpha-n&0\\0&n\end{pmatrix}P_n(x),
$$ as shown in \cite{CG5}.

One considers here the integral operator $S$ in \eqref{Iop} defined by the integral kernel
$$
k(x,y)=\sum_{n=0}^N4^n\widetilde{P}_n(x)^*\widetilde{P}_n(y).
$$
Particularly,  the norm of the polynomials $\widetilde{P}_n$ is given by $\displaystyle
||\widetilde{P}_n||=\frac{\sqrt{\pi}}{2^n}$.

Hence, for this particular example, the commuting  operator $\widetilde{D}$ is given by
$$
\widetilde{D}=\frac{d^2}{dx^2}(1-x^2)(x-\Omega)+\frac{d}{dx}\Big((-3x^2+2\Omega\, x+1)\mathrm{Id}+(x-\Omega)T\Big)+N(N+2)x,
$$
where $T$ is the permutation matrix in \eqref{ss}.

 \medskip

\section{Conclusion and outlook}
The main result derived in the previous sections is the existence of an explicit differential operator $\widetilde{D}$, which, as we proved, commutes with $S$.

    If one compares this result with the one in the celebrated series of papers by D.Slepian, H.Landau
and H.Pollak one may say that we are at the stage of their first paper. What is needed now is
an argument to conclude that the eigenfunctions of $\widetilde{D}$ will automatically be eigenfunctions of the
integral operator $S$.

    In the series of papers mentioned above the simplicity of the spectrum of $\widetilde{D}$ follows from classical
Sturm-Liouville theory and this guarantees that they have found a good way to compute the
eigenvectors of $S$.
In our situation, things could eventually be reduced to that case, but in principle $\widetilde{D}$, as well as $S$,
have ``matrix valued eigenvalues'', and the appropriate notion of ``simple spectrum'' requires careful
handling.

\medskip

To be quite explicit: there are two technical points that we are not addressing in this paper. The first one is the issue of ``simple spectrum'' in the
matrix valued context. A good place to look for this sort of question is \cite {RBK}. A second issue is that of making precise
the correct selfadjoint extension ( i.e. boundary conditions) for our second order differential operator $\widetilde{D}$. This  issue has been
implicit starting with the first papers  of Slepian, Landau and Pollak, and has been considered quite explicitly
in the very recent paper \cite{KV}.

\bigskip

\end{document}